\newtheorem{theorem}{Theorem}[section]
\newtheorem{fact}[theorem]{Fact}
\newtheorem{example}[theorem]{Example}
\newtheorem*{theorem*}{Theorem}
\newtheorem{maintheorem}{Theorem}
\newtheorem{maincorollary}[maintheorem]{Corollary}
\newcommand{\Z}{{\mathbb Z}}
\newcommand{\Q}{{\mathbb Q}}
\newcommand{\F}{{\mathbb F}}
\newcommand{\p}{\mathbb P}
\newcommand{\Min}{\operatorname{Min}}
\newcommand{\qand}{\quad \text{and} \quad}
\title[A reduction theorem for the Isomorphism Problem of group algebras over fields]{A reduction theorem for the Isomorphism Problem  \\  of group algebras over fields}
\author{Diego Garc\'{\i}a-Lucas  }
 \address{Departamento de Matem\'aticas, Universidad de Murcia, Spain}
 \email{diego.garcial@um.es}
\author{Ángel del R\'{\i}o}
  \address{Departamento de Matem\'aticas, Universidad de Murcia, Spain}
 \email{adelrio@um.es}
 \thanks{Partially supported by Grant PID2020-113206GB-I00 funded by MCIN/AEI/10.13039/501100011033  and by Grant Fundación Séneca 22004/PI/22. }
\keywords{Finite groups, group algebra, isomorphism problem}
\subjclass{20C05, 16S34}
\date{\today}
\begin{document}

\begin{abstract}
We prove that the Isomorphism Problem for group algebras reduces to group algebras over finite extensions of the prime field. In particular, the Modular Isomorphism Problem reduces to finite modular group algebras.
\end{abstract}

\maketitle

Let $R$ be commutative ring. The Isomorphism Problem for group rings over $R$ asks whether the isomorphism type of $G$ is determined by the isomorphism type of $RG$ as $R$-algebra.
More precisely:

\begin{quote}
\textbf{Isomorphism Problem for Group Rings}: Let $R$ be a commutative ring and let $G$ and $H$ be groups. If $RG$ and $RH$ are isomorphic as $R$-algebras, are $G$ and $H$ isomorphic groups?
\end{quote}

This problem has been studied extensively, with special emphasis on the cases where the groups are finite and the coefficient ring is either the integers or a field. One of the first results is due to G. Higman who proved that if $G$ and $H$ are finite abelian groups and $\Z G\cong \Z H$, then $G\cong H$ \cite{HigmanThesis,Higman40}.
 This was extended to finite metabelian groups by  A. Whitcomb   \cite{Whitcomb},   and  later extended  to  abelian-by-nilpotent groups by K. W. Roggenkamp and L. Scott \cite{RoggenkampScott1987} and to nilpotent-by-abelian groups by W. Kimmerle (see \cite[Section~XII]{RoggenkampTaylor}).
However, M. Hertweck founded two non-isomorphic finite solvable groups with isomorphic integral group rings \cite{Hertweck2001}.
See \cite[Section~3]{MargolisdelRioSurvey} for an overview on the Isomorphism Problem for integral group rings.

In this  note we only consider finite groups and fields as coefficient rings.
It is easy to obtain non-isomorphic finite groups with isomorphic group algebras. For example, if $G$ and $H$ are abelian groups of the same order  $n$  and $F$ is an algebraically closed field of characteristic not dividing $n$, then $FG\cong FH$.
However, by a result of S. Perlis and G. L. Walker, if $G$ and $H$ are non-isomorphic abelian groups, then $\Q G\not\cong \Q H$ \cite{PW50}.   A more elaborated counterexample, given by E. Dade, consists in two non-isomorphic finite metabelian groups $G$ and $H$ with $FG\cong FH$ for every field $F$ \cite{Dade71}. The order of these  groups 
is divisible by two different primes.
On the other hand, D. S. Passman proved the following two results (see \cite{Passman65} or \cite[Theorems~1.9 and 1.11]{Passman77}). If $G$ and $H$ are finite groups with $\Q G\cong \Q H$, then $FG\cong FH$ for every field $F$ of characteristic coprime with the order of $G$. If $p$ is prime and $n$ is a positive integer, then there are at least $p^{\frac{2}{27}n^2(n-23)}$ pairwise non-isomorphic groups of order $p^n$ with isomorphic group algebras over any field of characteristic different from $p$.

These motivated the so-called Modular Isomorphism Problem (abbreviated MIP), which is the version of the Isomorphism Problem for group algebras of finite $p$-groups over the field with $p$ elements, or more generally over  fields of characteristic $p>0$.
 This turned out   to be quite different, and definitive answers more difficult to obtain. Some partial positive solutions where obtained by a number of authors  \cite{Passman1965p4,PS72, BC88,San89,Drensky,SalimSandlingp5,Bag99,Her07,EK11,BK19,BdR20, MM20,MSS21,MS22,GLdRS2022}, and recently a negative solution for $p=2$ was given in \cite{GarciaMargolisdelRio}.

 The previous discussion shows that, even in the same characteristic, changing the field may alter  the answer to the Isomorphism Problem.
 Yet, if the Isomorphism Problem for $FG$ and $FH$ has a positive solution and $K$ is a subfield of $F$, then it also has a positive solution for $KG$ and $KH$ because $FG\cong F\otimes_K KG$.
Hence, the larger the field,  the greater the chances for the Isomorphism Problem to have a negative answer are. The aim of this note is to bound the class of coefficient fields when searching for negative solutions for the Isomorphism Problem. Namely, we prove the following:

\begin{maintheorem}	\label{theorem:reductionFields}
Let $F$ be a field, $\p$ the prime field of $F$, and $G$ and $H$ finite groups. If $F G\cong  F H$, then there exist a finite extension $F_0$ of $\p$ such that $F_0G\cong F_0H$.
\end{maintheorem}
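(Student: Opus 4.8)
The plan is to encode the existence of an algebra isomorphism as the consistency of a finite system of polynomial equations with coefficients in the prime field $\p$, and then to use the Nullstellensatz to descend from an $F$-point of the resulting affine $\p$-scheme to a point over a finite extension of $\p$.

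First I would record that $FG\cong FH$ forces $|G|=|H|=:n$ by comparing $F$-dimensions. Fix an $F$-algebra isomorphism $\varphi\colon FG\to FH$ with inverse $\psi$, and write them in the group bases: $\varphi(g)=\sum_{h\in H}a_{g,h}\,h$ and $\psi(h)=\sum_{g\in G}b_{h,g}\,g$, with scalars $a_{g,h},b_{h,g}\in F$. The point is that this data is finite and that the statements ``$\varphi$ is an algebra homomorphism'' and ``$\psi=\varphi^{-1}$'' translate into finitely many polynomial identities among the $a_{g,h}$ and $b_{h,g}$ whose coefficients are integers (hence lie in $\p$): comparing coefficients in $\varphi(g_1g_2)=\varphi(g_1)\varphi(g_2)$ gives $a_{g_1g_2,h}=\sum_{h_1h_2=h}a_{g_1,h_1}a_{g_2,h_2}$, the unit is recorded by $a_{1,h}=\delta_{h,1}$, and the matrices $\Phi=(a_{g,h})$ and $\Psi=(b_{h,g})$ satisfy $\Phi\Psi=\Psi\Phi=I_n$. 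All of these hold in $F$.

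Next I would let $A$ be the $\p$-subalgebra of $F$ generated by the finitely many scalars $a_{g,h}$ and $b_{h,g}$. Then $A$ is a nonzero (it is a subring of the field $F$) finitely generated $\p$-algebra, and by construction all the identities above hold already in $A$. The heart of the argument is now to replace $A$ by a finite extension of $\p$: choose any maximal ideal $\mathfrak{m}$ of $A$; by the Nullstellensatz (Zariski's lemma) the residue field $F_0:=A/\mathfrak{m}$ is a \emph{finite} extension of $\p$ --- a number field in characteristic $0$ and a finite field in characteristic $p$. Reducing all the scalars modulo $\mathfrak{m}$ and defining $\bar\varphi\colon F_0G\to F_0H$ by $\bar\varphi(g)=\sum_h\bar a_{g,h}\,h$, the polynomial identities are preserved by the ring homomorphism $A\to F_0$, so $\bar\varphi$ is an $F_0$-algebra homomorphism; and the reduction of $\Phi\Psi=\Psi\Phi=I_n$ shows it is invertible. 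Hence $F_0G\cong F_0H$, as required.

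I expect the only real obstacle to be exactly the passage from finitely generated to finite. The naive move --- taking the subfield of $F$ generated by the coefficients of $\varphi$ and $\varphi^{-1}$ --- only produces a finitely generated field extension of $\p$, which may well have positive transcendence degree and thus infinite degree; this does not prove the theorem. What rescues the argument is that invertibility of $\varphi$ is witnessed by the \emph{polynomial} identity $\Phi\Psi=I_n$ (rather than by the open condition $\det\Phi\neq 0$), so it specializes along the reduction $A\to A/\mathfrak{m}$ with no loss, while Zariski's lemma guarantees that the residue field is finite over $\p$. A cosmetic point worth flagging is that the resulting $F_0$ need not embed into $F$ (for instance, when $F=\R$ one may be forced into a non-real number field), but the statement only asks for an abstract finite extension of the prime field, so this causes no difficulty.
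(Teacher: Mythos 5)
Your proof is correct, and it takes a genuinely different route from the paper's. The two arguments start identically — the paper's \Cref{fact3.1} is your reduction to the subfield/subring generated by the coefficients — but they diverge precisely at the passage from \emph{finitely generated} to \emph{finite}, the obstacle you flag. The paper records only the coefficients of $\varphi$, so invertibility there is the open condition $\det\Phi\neq 0$, and it then needs genuine field theory to specialize the transcendental part of $F/\p$ without killing this determinant: decompose $F/\p$ into a purely transcendental extension followed by a finite one, repair inseparability in positive characteristic by adjoining $p^M$-th roots of the transcendence basis so that the top layer becomes Galois, invoke the primitive element theorem to write $F=E(\zeta)$, localize $\p[X_1,\dots,X_m]$ at a single element $Q$ gathering all denominators and, crucially, the constant term of the minimal polynomial of $d=\det\Phi$, and finally choose a classical Nullstellensatz point avoiding $Q=0$; the constant-term device is exactly what forces $\bar d\neq 0$ after specialization. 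Your single idea — carrying the coefficients of $\varphi^{-1}$ along, so that invertibility becomes the polynomial identity $\Phi\Psi=\Psi\Phi=I_n$, which survives every ring homomorphism — collapses all of this into one application of Zariski's lemma to a maximal ideal $\mathfrak{m}$ of the finitely generated $\p$-algebra $A$. Your route is shorter, uniform in the characteristic, and immune to separability issues; what the paper's route buys in exchange is a more explicit outcome, realizing $F_0$ as a subfield of the algebraic closure of $\p$ inside $\bar F$ by evaluating the transcendental generators, whereas your $F_0=A/\mathfrak{m}$ is an abstract residue field — which, as you correctly note, is all the statement requires.
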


If the characteristic of $\p$ is coprime with the order of the group $G$, then
a finite extension of $\p$ split $\p G$ and $\p H$ and hence, in this case, a proof of \Cref{theorem:reductionFields} is straightforward`. However, we present a unified proof for any characteristic.
In contrast with \Cref{theorem:reductionFields}, it is easy to see that there is no absolute bound for the index of $\p$ in the field $F_0$ of \Cref{theorem:reductionFields} (see \Cref{proposition}).

The application of \Cref{theorem:reductionFields} to the MIP shows that this question can be regarded as  exclusively about finite objects. Formally:
\begin{maincorollary}
	\label{cor:reductionFields}
	Let $G$ and $H$ finite $p$-groups such that $F G\cong  F H$ for some field $F$ of characteristic $p$. Then there exists a finite field $F_0$ of characteristic $p$ such that $F_0G\cong F_0H$.
\end{maincorollary}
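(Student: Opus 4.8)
The plan is to obtain the corollary as a direct specialization of \Cref{theorem:reductionFields}, using only that a finite $p$-group is a finite group and that the prime field of a field of characteristic $p$ is $\F_p$. First I would fix finite $p$-groups $G$ and $H$ together with a field $F$ of characteristic $p$ satisfying $FG \cong FH$, and identify the prime field $\p$ of $F$: since $\mathrm{char}(F) = p > 0$, the image of $\Z$ in $F$ is already a field isomorphic to $\F_p$, so $\p \cong \F_p$. Applying \Cref{theorem:reductionFields} to $F$, $\p$, $G$ and $H$ then yields a finite extension $F_0$ of $\p \cong \F_p$ with $F_0 G \cong F_0 H$.

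The remaining step is to check that this $F_0$ is a finite field of characteristic $p$, which I would do by the usual counting argument: a finite extension $F_0/\F_p$ of degree $k$ is an $\F_p$-vector space of dimension $k$, hence has exactly $p^k$ elements, and it has characteristic $p$ because it contains $\F_p$. Thus $F_0$ is a finite field of characteristic $p$ realizing the isomorphism $F_0 G \cong F_0 H$, which is precisely the conclusion.

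I do not expect any genuine obstacle in this argument: all the mathematical content is carried by \Cref{theorem:reductionFields}, and the corollary merely records what that theorem says in the defining-characteristic case, where the prime field is the finite field $\F_p$ and every finite extension of it is again finite. The point worth stressing is conceptual rather than technical, namely that this specialization reduces the Modular Isomorphism Problem to group algebras over the finite fields $\F_{p^k}$, so that the problem may be regarded as a question about finite objects only.
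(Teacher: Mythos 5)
Your proposal is correct and matches the paper's intent exactly: the paper treats \Cref{cor:reductionFields} as an immediate specialization of \Cref{theorem:reductionFields}, with no separate proof beyond observing that the prime field of a field of characteristic $p$ is $\F_p$ and that finite extensions of $\F_p$ are finite fields. Your extra verification that $F_0$ is finite of characteristic $p$ is exactly the (trivial) gap-filling the paper leaves implicit.
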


The fact that this last reduction is not (to the best of our knowledge)   mentioned elsewhere is somehow surprising, since, although most of the results known about the MIP depend heavily on the primality of the field (e.g. \cite{San89}), some authors were interested in the question substituting the prime field for arbitrary fields  of characteristic $p$. For instance, in \cite{Drensky}, V. Drensky showed that the Isomorphism Problem for fields of characteristic $p$ and finite $p$-group with center of index $p^2$ has a positive solution , even though the result for the prime field was already known by a result of I. B. N. Passi and S. K. Sehgal \cite{PS72}.
Further classical results on the MIP which are only stated over the prime field are known to hold for arbitrary fields of the same characteristic (see e.g. \cite{San96}).
A complete list of results about the MIP, distinguishing which ones are known for every field of characteristic $p$, and which ones only for the prime field, can be found in the recent survey \cite{Mar22}.
Moreover,  some of the techniques used to study the MIP, mainly the ones consisting on counting elements in the group algebra verifying some property, as suggested by R. Brauer in \cite{Bra63}, are only available for finite fields.
This, apparently, makes  the reduction in \Cref{cor:reductionFields} a potentially useful tool in the study of the extended version of the MIP.


The state of the art in the extended version of the MIP suggests the following question.

\medskip
\noindent\textbf{Question}.
	Let $G$ and $H$ be finite $p$-groups such that $FG\cong FH$ for some field $F$ of characteristic $p$. Does it imply that $\F_p G\cong \F_pH$, for the field $\F_p$ with $p$ elements?

\bigskip

For the proofs we use standard algebraic notation. For example, given rings $R\subseteq S$ and $T$ a subset of $S$, $R[T]$ denotes the subring of $S$ generated by $R\cup T$.
In case $R$ and $S$ are fields, $R(T)$ denotes the smallest subfield of $S$ containing $R\cup T$.
When $T=\{t_1,\dots,t_n\}$ we write $R[t_1,\dots,t_n]$ and $R(t_1,\dots,t_n)$ rather than $R[\{t_1,\dots,t_n\}]$ or $R(\{t_1,\dots,t_n\})$.
If $\psi:R\to R'$ is a ring homomorphism and $Z$ is an indeterminate, then we also denote $\psi$ the natural extension of $\psi$ to a homomorphism $R[Z]\to R'[Z]$.
Let $E/F$ be a field extension. If $\alpha\in E$ is algebraic over $F$, then $\Min_F(\alpha)$ denotes the minimal polynomial of $\alpha$ over $F$.
Observe that if $FG\cong FH$ for groups $G$ and $H$, then $EG\cong EH$, because $EG\cong E\otimes_F FG$. We will use this frequently to replace $F$ by a convenient overfield.

We will use another trivial observation:

 \begin{fact}\label{fact3.1}
Let $F$ be a field and $\phi: F G\to  F H$ be an algebra isomorphism.
Write $\phi(g)=\sum_{h\in H}a_{gh} h$ for each $g\in G$, where $a_{gh}\in  F$.
If $L=\p (a_{gh}: g\in G, h\in H)$, then $LG\cong LH$.
\end{fact}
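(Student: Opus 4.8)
The plan is to show that the given isomorphism $\phi$ restricts to an isomorphism between $LG$ and $LH$. Since $L\subseteq F$ and $\phi(g)=\sum_{h\in H}a_{gh}h$ with every $a_{gh}\in L$ by the definition of $L$, the $L$-subalgebra $LG$ of $FG$ (which is spanned over $L$ by $G$) is mapped by $\phi$ into $LH$. Thus $\phi$ restricts to an $L$-algebra homomorphism $\phi_0\colon LG\to LH$, and $\phi_0$ is injective because $\phi$ is. The only real content is therefore surjectivity of $\phi_0$.

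For surjectivity I would use a linear-algebra argument. First note that $FG\cong FH$ forces $|G|=|H|=:n$, so the family $\{\phi(g)\}_{g\in G}$ is an $F$-basis of $FH$, and the matrix $A=(a_{gh})_{g\in G,\,h\in H}$, an $n\times n$ matrix with entries in $L$ expressing this family in terms of the basis $H$, is invertible over $F$. Hence $\det A$ is a nonzero element of $L$, and since $L$ is a field, $A$ is already invertible over $L$; by Cramer's rule the entries of $A^{-1}$ lie in $L$ as well. Writing $\phi^{-1}(h)=\sum_{g\in G}b_{hg}g$ and comparing $\phi(\phi^{-1}(h))=h$ with $\phi(g)=\sum_h a_{gh}h$, one sees that the coefficient matrix $(b_{hg})$ is exactly $A^{-1}$, so $\phi^{-1}(h)\in LG$ for every $h\in H$.

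Consequently $\phi^{-1}$ restricts to an $L$-algebra homomorphism $LH\to LG$ which is a two-sided inverse of $\phi_0$. Therefore $\phi_0$ is an $L$-algebra isomorphism and $LG\cong LH$. The main point, and the only step that requires more than bookkeeping, is the observation that a square matrix with entries in the subfield $L$ that becomes invertible over the overfield $F$ is already invertible over $L$; this is what moves the coefficients of the inverse map back into $L$ and thereby yields surjectivity of $\phi_0$.
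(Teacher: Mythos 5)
Your proof is correct and follows essentially the same route as the paper: the paper's one-line argument also views the map $g\mapsto\sum_{h\in H}a_{gh}h$ as an $L$-algebra homomorphism $LG\to LH$ and concludes it is an isomorphism because the matrix $(a_{gh})_{g\in G,h\in H}$ is non-singular, which is exactly your key observation that invertibility over $F$ of a matrix with entries in $L$ gives invertibility over $L$. You have merely spelled out the details (restriction, injectivity, Cramer's rule) that the paper compresses into the word ``immediate.''
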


\begin{proof}
Immediate, as the homomorphism $\tilde \phi :LG\to LH$ given by  $\tilde \phi(g)=\sum_{h\in H} a_{gh} h $ is an isomorphism, because the matrix $(a_{gh})_{g\in G,h\in H}$ is non-singular.
\end{proof}

\begin{proof}[Proof of \Cref{theorem:reductionFields}]
Let $F$ be a field, and let $G$ and $H$ be finite groups such that $F G\cong  F H$.
Let $\bar  F$ be an algebraic closure of $F$.
All our fields will be subfields of $\bar F$.
By \Cref{fact3.1} we can assume that $F=\p(a_{gh}: g\in G,h\in H)$.
In particular $F$ is finitely generated over $\p$.
By \cite[\S 14 Theorem 1]{BourbakiAlgebraII}, there is an intermediate extension $\p\subseteq E\subseteq  F$ with $E/\p$ purely transcendental,
$F/E$ algebraic and both finitely generated.
Hence $E=\p(X_1,\dots, X_m)$, where the $X_i$'s are algebraically independent over $\p$ and replacing $F$ by the normal closure of $F/E$ we may assume that $F/E$ is a finite normal extension.

\begin{fact}\label{fact2}
There is a finite algebraic extension $L$ of $E$ such that $\p\subseteq L$ is purely transcendental with finite transcendence degree and $LF/L$ is a finite Galois extension.
\end{fact}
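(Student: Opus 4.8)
The plan is to absorb the inseparable part of $F/E$ into the ground field by adjoining suitable $p$-power roots of the transcendence basis, exploiting that the prime field $\p$ is perfect. If $\p$ has characteristic $0$ there is nothing to do: a normal extension in characteristic $0$ is automatically separable, hence Galois, so $L=E$ works. I therefore assume that $\p$ has characteristic $p>0$.

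First I would recall the structure of the finite normal extension $F/E$. Let $E_s$ be the separable closure of $E$ in $F$ and let $E_i=\{x\in F: x \text{ is purely inseparable over } E\}$. Then $E_s/E$ is Galois, $E_i/E$ is purely inseparable, $F/E_i$ is separable, and, crucially because $F/E$ is normal, $F=E_sE_i$. The point of this decomposition is that every element of $E_i$ is purely inseparable \emph{over $E$ itself}: there is $N\ge 0$ with $x^{p^N}\in E$ for all $x\in E_i$, that is, $E_i\subseteq E^{1/p^N}$, where $E^{1/p^N}$ denotes the field of $p^N$-th roots of the elements of $E$ inside $\bar F$.

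I would then set $L:=E^{1/p^N}$ and verify the two required properties. Since $\p$ is perfect one has $\p^{1/p^N}=\p$, and therefore
$$ L=E^{1/p^N}=\p\bigl(X_1^{1/p^N},\dots,X_m^{1/p^N}\bigr); $$
as the elements $X_i^{1/p^N}$ are algebraically independent over $\p$, this exhibits $L/\p$ as purely transcendental of transcendence degree $m$, while $L/E$ is finite (purely inseparable). It remains to see that $LF/L$ is Galois. By construction $E_i\subseteq L$, so $LF=LE_sE_i=LE_s$. Now $E_s/E$ is separable algebraic, and separability is preserved under the base change to $L$, so $LE_s/L=LF/L$ is separable; moreover $F/E$ normal forces $LF/L$ normal. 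A separable normal extension is Galois, which finishes the argument.

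The two places that require care, and where I expect to spend the effort, are (i) the structure statement $F=E_sE_i$ for normal extensions, since this is exactly what guarantees that the purely inseparable generators have their $p^N$-th powers in $E$ rather than merely in $E_s$, and hence that adjoining roots of the $X_i$ alone suffices; and (ii) the identity $E^{1/p^N}=\p(X_1^{1/p^N},\dots,X_m^{1/p^N})$, which rests on the perfectness of the prime field and is what keeps $L$ purely transcendental over $\p$.
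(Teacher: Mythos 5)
Your proof is correct and takes essentially the same approach as the paper: the paper likewise invokes the structure theorem for normal extensions (citing Isaacs) to split off a purely inseparable part $K/E$ with $F/K$ Galois, sets $L=\p\bigl(X_1^{1/p^{M}},\dots,X_m^{1/p^{M}}\bigr)$ for $M$ large enough, and verifies $K\subseteq L$ by exactly the Frobenius-and-prime-field computation that underlies your identity $E^{1/p^{N}}=\p\bigl(X_1^{1/p^{N}},\dots,X_m^{1/p^{N}}\bigr)$. The differences are only organizational: you define $L$ abstractly as $E^{1/p^{N}}$ and then prove it is purely transcendental, and you conclude via $LF=LE_s$ together with preservation of separability and normality, whereas the paper defines $L$ by explicit generators, checks $K\subseteq L$ directly, and concludes by base change of the Galois extension $F/K$.
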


\begin{proof}
Let $p$ be the characteristic of $F$. The statement is clear if $p=0$ so suppose that $p>0$.
By \cite[Theorem 19.18]{IsaacsAlgebra} there is a subextension $E \subseteq K\subseteq  F$ with $K/E$ purely inseparable, and $F/K$ separable. Observe that all these extensions are finite and $F/K$ is Galois.

Let $\alpha_1,\dots, \alpha_m\in K$ such that $K=E(\alpha_1,\dots, \alpha_{ m_i})$. Each $\alpha_i$ is the unique root of a polynomial $T^{p^{m_i}}-\beta_i$ for some $\beta_i\in E$.
Let $M=\max\{m_i: i=1,\dots, m\}$ and let $T_i$ be the unique root of $T^{p^M}-X_i$ in $\bar  F$, for $i\in\{1,\dots, m\}$.
Set $L=E(T_1,\dots,T_m)=\p(T_1,\dots, T_m)$.
Then $L/\p$ is purely transcendental with finite transcendence degree and, as $F/K$ is finite and Galois, to prove that so is $LF/L$, it suffices to show that $K\subseteq L$.
As $E=\p(X_1,\dots,X_m)$, each $\beta_i=\frac{\delta_i(X_1,\dots,X_m)}{R_i(X_1,\dots,X_m)}$ for some $\delta_i,R_i\in \p[X_1,\dots,X_m]$ and therefore $\tilde \beta_i=\frac{\delta_i(T_1,\dots,T_m)}{R_i(T_1,\dots,T_m)}$ satisfies $\tilde \beta_i^{p^M}=\beta_i$.
Then $\alpha_i$ is the unique root of $T^{p^{m_i}}-\beta_i=(T-\tilde \beta_i^{p^{M-m_i}})^{p^{m_i}}$, so $\alpha_i=\tilde \beta_i^{p^{M-m_i}}\in L$.
Hence $K\subseteq L$, as desired.
\end{proof}

Let $L$ be a field as in \Cref{fact2}. By replacing $E$ and $F$ by $L$ and $LF$ respectively, we may assume that $F/E$ is a finite Galois extension.
Then, by the Primitive Element Theorem, $F=E(\zeta)$ for some $\zeta\in  F$.

Let $R=\p[X_1,\dots, X_m]\subseteq E$.
Let $d$ be the determinant of the matrix $(a_{gh})_{g\in G, h\in H}$, $A=\Min_{E}(\zeta)$ and $B=\Min_{E}(d)$.
Then
	$$a_{gh}=\frac{\sum_{i=0}^{n_{gh}} P_{ghi} \zeta^i}{Q_{gh}},\quad
	A=\sum_i \frac{\alpha_i}{\beta_i} Z^i \in E[Z] \qand
	B=\sum_i \frac{\gamma_i}{\delta_i} Z^i \in E[Z],$$
for some $P_{ghi},Q_{gh },\alpha_i, \beta_i,\gamma_i,\delta_i\in R$, with each $Q_{gh},\beta_i$ and $\delta_i$ different from $0$.
Let
	$$Q=\gamma_0  \prod_i \delta_i \prod_i \beta_i\prod_{\tiny \begin{array}{c}
		g\in G, \\
		h\in H 
	\end{array}} Q_{gh} \in \p[X_1,\dots, X_n].$$
Let $S=\{Q^k : k\ge 0\}$, a multiplicative subset of $R$, and let $D=S^{-1}R$ be the ring of fractions of $R$ by $S$.
As $R$ is a unique factorization domain, so is $D$.
Let $\bar \p$ be the algebraic closure of $\p$ in $\bar{F}$.
By Hilbert's Nullstellensatz, there is an evaluation homomorphism $\psi:R\to \bar \p$ such that $\psi(Q)\neq 0$.
By the universal property of the ring of fractions, $\psi$ extends to a homomorphism $D\to \bar \p$, which we also denote $\psi$.
As $A\in D[Z]$, by Gauss Lemma the kernel of the evaluation homomorphism $\delta:D[Z]\to D[\zeta]$ mapping $Z$ to $\zeta$ is generated by $A$.
Let $\bar \zeta$ be a root of the polynomial $\psi(A)$ in $\bar \p$.
The homomorphism $\psi':D [Z]\to \bar \p$ which extends $\psi$ and maps  $Z$  to $\bar \zeta$ has $A$ in its kernel, so there is a homomorphism
$\psi'':D[\zeta] \to \bar \p$ making commutative the diagram $$\xymatrix{
	D[Z] \ar[dr]_-{\psi'} \ar[r]^-{\delta } & D[\zeta] \ar[d]^-{\psi''} \\
	& \bar \p
}. $$
Let $\bar a_{gh }=\psi''(a_{gh})$ and $\bar d=\psi''(d)$.
Then $\bar d$ is the determinant of the matrix $(\bar a_{gh})_{g\in G,h\in H}$, and a root of the polynomial $\psi'' (B)$. As the independent term of
$\psi''(B)$ is $\psi''(P_0)=\psi(P_0)\neq 0$, we conclude that $\bar d\neq 0$. Let $F_0=\p(\bar a_{gh}: g\in G, h\in H)\subseteq \bar \p$. Then $F_0$ is a finite extension of $\p$. Moreover, the map $G\mapsto (F_0H)^\times$ given by $g\mapsto \sum_{h\in H}  \bar a_{gh} h$ is an homomorphism of groups, as it is just the composition of the inclusion $G\subseteq D[\zeta] G$, the restriction $\phi|_{D[\zeta]G}:D[\zeta]G\to D[\zeta]H$, and the homomorphism $D[\zeta] H\to F_0H$ induced by $\psi'':D[\zeta]\to F_0$. Therefore there is a homomorphism of $F_0$-algebras $\bar \phi: F_0G\to F_0H$ such that $\bar \phi(g)=\sum_{h\in H}   \bar a_{gh} h$ for each $g\in G$. As $\bar d\neq 0$, the set $\bar \phi(G)$ is a basis of $F_0H$. Thus $\bar \phi$ is an isomorphism.
\end{proof}


We denote the cyclic group of order $n$ by $C_n$.
For $m$ and $n$ coprime integers, let $o_m(n)$ denote the multiplicative order of $n$ modulo $m$, i.e. the smallest positive integer $t$ with $n^t\equiv 1 \mod m$.

\begin{example}\label{proposition}
For every prime field $\p$ and every positive integer $n$ there exist a prime integer $q$ different than the characteristic of $\p$, and a positive integer $k$ such that $FC_q^k\not\cong FC_{q^k}$ for every field extension $F$ of $\p$ with $[F:\p]\le n$, but $EC_q^k\cong EC_{q^k}$ for some field extension $E$ of $\p$.
\end{example}

\begin{proof}
We first suppose that $\p$ is of characteristic $0$. In this case we take $q=2$ and $k$ the least positive integer with $2^{k-1}>n$.
To prove that $q$ and $k$ satisfy the desired condition we use a Theorem of Perlis and Walker \cite{PW50} (see also \cite[Theorem~3.3.6]{JespersdelRioGRG1}). Let $F$ be a field extension of $\p$. Then $FC_2^k\cong F^{2^k}$, and if $F$ contains a primitive $2^k$-root of unity $\zeta$, then $FC_{2^k}\cong F^{2^k}$. Thus, if $E=\p(\zeta)$ then $EC_2^k\cong EC_{2^k}$, while if $[F:\p]\le n$ then $\zeta\not\in F$, so $FC_{2^k}$ has a factor isomorphic to $F(\zeta)$ and hence $FC_2^k\not\cong FC_{2^k}$.

For positive characteristic we use a version of the Perlis-Walker Theorem for finite fields (see \cite[Problem~3.3.9]{JespersdelRioGRG1}).

Suppose that $\p$ has characteristic $p>2$ and let $q$ be a prime divisor of $p-1$. The sequence $(o_{q^k}(p))_k$ is unbounded and we take a positive integer $k$ such that $o_{q^k}(p)>n$.
Let $F$ be a finite field extension of $\p$ with $t=[F:\p]$.
As in the previous case $FC_q^k\cong F^{q^k}$ and if $p^t\equiv 1 \mod q^k$, then $FC_{q^k}\cong F^{q^k}$.
However, if $t\le n$, then $FC_{q^k}$ has an epimorphic image which is a field extension of $F$ of degree $o_{q^k(p^t)}=\frac{o_{q^k}(p)}{\gcd(t,o_{q^k}(p))}>1$ because $t<o_{q^k}(p)$. Thus $FC_q^k\not\cong FC_{q^k}$.

Finally, suppose that $\p$ has characteristic $2$. In this case we take $q=3$ and $k$ the least integer such that $3^{k-1}>n$.
Let $F$ be a field extension of $\p$ of degree $t$ and $K$ a field extension of $F$ of degree $2$. If $t$ is even, then $FC_3=F^3$. However, if $t$ is odd, then $FC_3\cong F\times K$. Therefore, $FC_3^k$ is a direct product of fields isomorphic to $F$ or $K$, and if $t$ is even, then $FC_3^k\cong F^{3^k}$.
If $2^t\equiv 1 \mod 3^k$, then $t$ is even and $FC_{3^k}\cong F^{3^k}$.
However, if $t\le n$, then $FC_{3^k}$ has an epimorphic image isomorphic to a field extension of $F$ of degree $o_{3^k}(2^t)=\frac{o_{3^k}(2)}{\gcd(t,o_{3^k}(2))}=\frac{2\cdot 3^{k-1}}{\gcd(t,2\cdot 3^{k-1})}>2$ because $t<3^{k-1}$. Thus $FC_3^k\not\cong FC_{3^k}$.
\end{proof}

 
 \bibliographystyle{amsalpha}
 \bibliography{MIP} 

\providecommand{\bysame}{\leavevmode\hbox to3em{\hrulefill}\thinspace}
\providecommand{\MR}{\relax\ifhmode\unskip\space\fi MR }
\providecommand{\MRhref}[2]{%
  \href{http://www.ams.org/mathscinet-getitem?mr=#1}{#2}
}
\providecommand{\href}[2]{#2}
\begin{thebibliography}{GLMdR22}

\bibitem[Bag99]{Bag99}
C.~Bagi\'{n}ski, \emph{On the isomorphism problem for modular group algebras of
  elementary abelian-by-cyclic {$p$}-groups}, Colloq. Math. \textbf{82} (1999),
  no.~1, 125--136.

\bibitem[BC88]{BC88}
C.~Bagi\'{n}ski and A.~Caranti, \emph{The modular group algebras of
  {$p$}-groups of maximal class}, Canad. J. Math. \textbf{40} (1988), no.~6,
  1422--1435.

\bibitem[BdR21]{BdR20}
O.~Broche and \'{A}. del R\'{\i}o, \emph{The {M}odular {I}somorphism {P}roblem
  for two generated groups of class two}, Indian J. Pure Appl. Math.
  \textbf{52} (2021), 721--728.

\bibitem[BK19]{BK19}
C.~Bagi\'{n}ski and J.~Kurdics, \emph{The modular group algebras of
  {$p$}-groups of maximal class {II}}, Comm. Algebra \textbf{47} (2019), no.~2,
  761--771.

\bibitem[Bou90]{BourbakiAlgebraII}
N.~Bourbaki, \emph{Algebra {II}: Chapters 4-7 ({P}t.2)}, Springer, 1990.

\bibitem[Bra63]{Bra63}
R.~Brauer, \emph{Representations of finite groups}, Lectures on {M}odern
  {M}athematics, {V}ol. {I}, Wiley, New York, 1963, pp.~133--175.

\bibitem[Dad71]{Dade71}
E.~Dade, \emph{Deux groupes finis distincts ayant la m\^{e}me alg\`ebre de
  groupe sur tout corps}, Math. Z. \textbf{119} (1971), 345--348.

\bibitem[Dre89]{Drensky}
V.~Drensky, \emph{The isomorphism problem for modular group algebras of groups
  with large centres}, Representation theory, group rings, and coding theory,
  Contemp. Math., vol.~93, Amer. Math. Soc., Providence, RI, 1989,
  pp.~145--153.

\bibitem[EK11]{EK11}
B.~Eick and A.~Konovalov, \emph{The modular isomorphism problem for the groups
  of order 512}, Groups {S}t {A}ndrews 2009 in {B}ath. {V}olume 2, London Math.
  Soc. Lecture Note Ser., vol. 388, Cambridge Univ. Press, Cambridge, 2011,
  pp.~375--383.

\bibitem[GLdRS22]{GLdRS2022}
D.~García-Lucas, Á. del Río, and M.~Stanojkovski, \emph{On group invariants
  determined by modular group algebras: even versus odd characteristic},
  https://arxiv.org/abs/2209.06143.

\bibitem[GLMdR22]{GarciaMargolisdelRio}
D.~García-Lucas, L.~Margolis, and Á. del Río, \emph{Non-isomorphic
  $2$-groups with isomorphic modular group algebras}, J. Reine Angew. Math.
  \textbf{154} (2022), no.~783, 269--274.

\bibitem[Her01]{Hertweck2001}
M.~Hertweck, \emph{A counterexample to the isomorphism problem for integral
  group rings}, Ann. of Math. (2) \textbf{154} (2001), no.~1, 115--138.

\bibitem[Her07]{Her07}
Martin Hertweck, \emph{A note on the modular group algebras of odd {$p$}-groups
  of {$M$}-length three}, Publ. Math. Debrecen \textbf{71} (2007), no.~1-2,
  83--93.

\bibitem[Hig40a]{HigmanThesis}
G.~Higman, \emph{Units in group rings}, 1940, Thesis (Ph.D.)--Univ. Oxford.

\bibitem[Hig40b]{Higman40}
\bysame, \emph{The units of group-rings}, Proc. London Math. Soc. (2)
  \textbf{46} (1940), 231--248.

\bibitem[Isa94]{IsaacsAlgebra}
I.~Martin Isaacs, \emph{Algebra, a graduate course}, 1 ed., Mathematics,
  Brooks/Cole Pub. Co, 1994.

\bibitem[JdR16]{JespersdelRioGRG1}
E.~Jespers and {\'A}.~del R{\'{\i}}o, \emph{{Group ring groups. Volume 1:
  Orders and generic constructions of units}}, Berlin: De Gruyter, 2016.

\bibitem[{Mar}22]{Mar22}
L.~{Margolis}, \emph{{The Modular Isomorphism Problem: A Survey}}, Jahresber.
  Dtsch. Math. Ver. (2022).

\bibitem[MM20]{MM20}
L.~{Margolis} and T.~{Moede}, \emph{{The Modular Isomorphism Problem for small
  groups -- revisiting Eick's algorithm}}, arXiv:2010.07030,
  https://arxiv.org/abs/2010.07030.

\bibitem[MR18]{MargolisdelRioSurvey}
L.~Margolis and {\'A}.~del R{\'i}o, \emph{Finite subgroups of group rings: A
  survey}, preprint, arxiv.org/abs/1809.00718 (2018), 23 pages.

\bibitem[MS22]{MS22}
L.~Margolis and M.~Stanojkovski, \emph{On the modular isomorphism problem for
  groups of class $3$ and obelisks}, J. Group Theory \textbf{25} (2022), no.~1,
  163--206.

\bibitem[MSS21]{MSS21}
L.~Margolis, T.~Sakurai, and M.~Stanojkovski, \emph{Abelian invariants and a
  reduction theorem for the modular isomorphism problem},
  https://arxiv.org/abs/2110.10025.

\bibitem[Pas65a]{Passman1965p4}
D.~S. Passman, \emph{The group algebras of groups of order {$p^{4}$} over a
  modular field}, Michigan Math. J. \textbf{12} (1965), 405--415. \MR{0185022}

\bibitem[Pas65b]{Passman65}
\bysame, \emph{Isomorphic groups and group rings}, Pacific J. Math. \textbf{15}
  (1965), 561--583.

\bibitem[Pas77]{Passman77}
\bysame, \emph{The algebraic structure of group rings}, Pure and Applied
  Mathematics, Wiley-Interscience [John Wiley \& Sons], New York-London-Sydney,
  1977.

\bibitem[PS72]{PS72}
I.~B.~S. Passi and S.~K. Sehgal, \emph{Isomorphism of modular group algebras},
  Math. Z. \textbf{129} (1972), 65--73.

\bibitem[PW50]{PW50}
S.~Perlis and G.~L. Walker, \emph{Abelian group algebras of finite order},
  Trans. Amer. Math. Soc. \textbf{68} (1950), 420--426.

\bibitem[RS87]{RoggenkampScott1987}
K.~W. Roggenkamp and L.~Scott, \emph{Isomorphisms of {$p$}-adic group rings},
  Ann. of Math. (2) \textbf{126} (1987), no.~3, 593--647.

\bibitem[RT92]{RoggenkampTaylor}
K.~W. Roggenkamp and M.~J. Taylor, \emph{Group rings and class groups}, DMV
  Seminar, vol.~18, Birkh\"auser Verlag, Basel, 1992.

\bibitem[San89]{San89}
R.~Sandling, \emph{The modular group algebra of a central-elementary-by-abelian
  {$p$}-group}, Arch. Math. (Basel) \textbf{52} (1989), no.~1, 22--27.

\bibitem[San96]{San96}
\bysame, \emph{The modular group algebra problem for metacyclic {$p$}-groups},
  Proc. Amer. Math. Soc. \textbf{124} (1996), no.~5, 1347--1350.

\bibitem[SS96]{SalimSandlingp5}
M.~A.~M. Salim and R.~Sandling, \emph{The modular group algebra problem for
  groups of order {$p^5$}}, J. Austral. Math. Soc. Ser. A \textbf{61} (1996),
  no.~2, 229--237.

\bibitem[Whi68]{Whitcomb}
A.~Whitcomb, \emph{The {G}roup {R}ing {P}roblem}, ProQuest LLC, Ann Arbor, MI,
  1968, Thesis (Ph.D.) -- The University of Chicago.

\end{thebibliography}

\end{document}